\numberwithin{equation}{section}
\theoremstyle{plain}
\newtheorem{thm}{Theorem}[section]
\newtheorem{lem}[thm]{Lemma}
\newtheorem{prop}[thm]{Proposition}
\newtheorem{cor}[thm]{Corollary}
\newcommand{\thmref}[1]{Theorem~\ref{#1}}
\newcommand{\lemref}[1]{Lemma~\ref{#1}}
\newcommand{\propref}[1]{Proposition~\ref{#1}}
\newcommand{\corref}[1]{Corollary~\ref{#1}}
\theoremstyle{definition}
\newtheorem{rmk}[thm]{Remark}
\newtheorem{conj}{Conjecture}
\newtheorem*{hypo*}{Hypothesis~I}
\newcommand{\hyporef}[1]{Hypothesis~I}
\newcommand{\mf}{\mathbf}
\newcommand{\q}{\quad}
\newcommand{\mc}{\mathcal}
\newcommand{\mrm}{\mathrm}
\begin{document}

\title[Sign change, Yoshida lifts]{The first negative eigenvalue of Yoshida lifts}

\author{Soumya Das}
\address{Department of Mathematics\\
	Indian Institute of Science\\
	Bangalore - 560012, India.}
\email{soumya@iisc.ac.in}

\author{Ritwik Pal}
\address{Department of Mathematics\\
	Indian Institute of Science\\
	Bangalore - 560012, India.}
\email{ritwikpal@iisc.ac.in}

\date{}
\subjclass[2010]{11F46, 11F99, 11M99} 
\keywords{sign changes, eigenvalues, Yoshida lifts}

\begin{abstract}
We prove that for any given $\epsilon >0$, the first negative eigenvalue of the Yoshida lift $F$ of a pair of elliptic cusp forms $f,g$ having square-free levels (where $g$ has weight $2$ and satisfies $(\log Q_{g})^2 \ll \log Q_f$), occurs before $c_{\epsilon} \cdot Q_F^{1/2-2 \theta+ \epsilon} $; where $Q_F,Q_f,Q_g$ are the analytic conductors of $F,f,g$ respectively, $\theta < 1/4$, and $c_{\epsilon}$ is a constant depending only on $\epsilon$. 
\end{abstract}
\maketitle 

\section{Introduction} 
Eigenvalues of Hecke eigenforms are of considerable interest to number theorists, in particular their distribution (e.g., with respect to the Sato-Tate measure), magnitude (Ramanujan-Petersson conjecture), and more recently study of their signs have been the focus of intensive research. In this paper we are interested about the signs of eigenvalues of the so-called Yoshida lifts, whose definition would be recalled below. Let us first briefly discuss the setting of the problem and the results existing in the literature. The best known result for the first sign change of an elliptic newform was given by K. Matom\"aki \cite{km1} : if $Q_{f}$ is the analytic conductor (see section $2$ for the definition) of an elliptic newform $f$ of level $N$ and weight $k \geq 2$ (so $Q_f \asymp k^2N$, i.e., has the same order of magnitude as $k^2N$), then the first negative eigenvalue $a_n$ occurs for some $n \ll Q_{f}^{3/8}$. See also \cite{choie} for related results. 

There are far fewer results available in the context of Siegel modular forms. For a Siegel Hecke eigenform $F$ on $\mrm{Sp}_2(\mf Z)$, which is not a Maa{\ss} lift (so that $k \geq 20$), it is known \cite{KSieg} that its eigenvalues change signs infinitely many often. Related results are available in \cite{DK}, \cite{pita}. Concerning the first negative eigenvalue problem, the best known result \cite{siegel} due to Kohnen and Sengupta says that the first negative eigenvalue $\lambda_F(n)$ (with $F$ as above) occurs for 
\begin{equation} \label{1bd}
 n \ll Q_{F} \log^{20} Q_{F}, 
\end{equation}
the implied constant being absolute. Here $Q_F$ denotes the analytic conductor of $F$, defined in section~\ref{prelim}. This result was generalised to the case of higher levels (which were held fixed throughout the paper, but both Maa{\ss} lifts and non-lifts were considered) by J. Brown \cite{brown}, who got the same bound as above. Note that in both of these cases one has $Q_F \asymp k^2$. Improving these results seem to be a rather difficult problem. One of the main reasons behind this is that the Hecke relations between the eigenvalues of a Siegel-Hecke eigenform are more complicated than those for an elliptic newform. In this paper we restrict our attention to the case of the Yoshida lift of two elliptic cusp forms and show that one can improve the above results considerably (cf. \eqref{1bd}, \eqref{1/2bd}). 

The setting of this paper is as follows (see e.g. \cite{saha} for a more detailed discussion). Let $S_\kappa(\mc L)$ denote the space of cusp forms of even weight $\kappa \geq 2$ and level $\mc L$. Let $f \in S_{k}(N_1)$ and $g \in S_{2}(N_{2})$ be normalised newforms with $N_{1} , N_{2} \geq 1$ squarefree and $M:= \mrm{gcd} (N_{1}, N_{2}) > 1$. Assume that the Atkin-Lehner eigenvalues of $f$ and $g$ coincide for all $p$ dividing $M$. To this data, one can associate a Siegel modular form $F=F_{f,g} \in S_{k/2 +1} (\Gamma_{0}^{2} (N))$, where $N= \mrm{lcm} (N_{1}, N_{2})$, which is called the Yoshida lift attached to $f,g$ (see \cite{saha}, \cite{yoshida}). Let the Hecke eigenvalues of $F$ be $\lambda_{F} (n)$. Further, let $\theta$ denote any saving over the exponent of convexity (so that $\theta<1/4$, see \cite{subconvexity}) bound for the normalised $L$-functions $L(f,s),L(g,s)$ on the critical line. We prove the following theorem.     

\begin{thm} \label{mthm}
Let $\epsilon >0$ be given and the notation and setting be as in the above paragraph.  Suppose $(\log Q_{g})^2 \ll \log Q_f$. Then there exists $n \in \mathbf{N}$ with
\begin{equation} \label{1/2bd}
n \ll_{\epsilon} Q_{F}^{1/2-  2\theta+ \epsilon}  
\text{ such that } \lambda_{F}(n) <0 . 
\end{equation} 
\end{thm}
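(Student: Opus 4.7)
The strategy is to exploit the spin $L$-function factorisation $L(F,s) = L(f,s) L(g,s)$ characteristic of the Yoshida lift, together with subconvexity for the two degree-two factors on the critical line. Suppose for contradiction that $\lambda_F(n) \geq 0$ for every $n \leq X := c_\epsilon Q_F^{1/2-2\theta+\epsilon}$, fix a smooth bump $V \geq 0$ supported in $[1/2,3]$ with $V \equiv 1$ on $[1,2]$, and compare upper and lower bounds for
\[
S(X) \,:=\, \sum_n \lambda_F(n)\, V(n/X).
\]

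For the \emph{lower bound}, the Rankin-Selberg convolution factors via the isobaric decomposition of the Yoshida lift as
\[
L(F \otimes F, s) \,=\, L(f \otimes f, s)\, L(g \otimes g, s)\, L(f \otimes g, s)^2,
\]
which inherits a double pole at $s=1$ from $\zeta(s)^2$. A Wiener-Ikehara / Perron argument, using the polylogarithmic lower bounds for $L(\mathrm{sym}^2 f, 1)$, $L(\mathrm{sym}^2 g, 1)$ and $L(f \otimes g, 1)$, and invoking the hypothesis $(\log Q_g)^2 \ll \log Q_f$ to absorb every $g$-dependent logarithmic factor into a $(\log Q_f)^{O(1)}$ loss, yields $\sum_n \lambda_F(n)^2 V(n/X) \gg X^{1-\epsilon}$ in the appropriate normalisation. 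Combining this with the Ramanujan-type bound $|\lambda_F(n)| \ll n^{\epsilon}$ (inherited from Deligne's theorem via the Dirichlet convolution structure of the $\lambda_F(n)$) and the positivity assumption produces
\[
S(X) \,\geq\, \frac{\sum_n \lambda_F(n)^2 V(n/X)}{\max_{n \leq 3X}|\lambda_F(n)|} \,\gg\, X^{1 - O(\epsilon)}.
\]

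For the \emph{upper bound}, Mellin inversion expresses
\[
S(X) \,=\, \frac{1}{2\pi i} \int_{(\sigma)} L(f,s) L(g,s)\, \widetilde V(s)\, X^s\, ds \qquad (\sigma > 1);
\]
shifting the contour to $\mathrm{Re}(s) = 1/2$ crosses no poles (both $f$ and $g$ being cuspidal), and on the critical line the subconvexity hypothesis provides
\[
|L(f, \tfrac{1}{2}+it)\, L(g, \tfrac{1}{2}+it)| \,\ll\, (Q_f Q_g)^{1/4 - \theta + \epsilon/2} (1+|t|)^{A},
\]
after which the rapid decay of $\widetilde V$ along vertical strips gives $S(X) \ll_\epsilon X^{1/2} (Q_f Q_g)^{1/4 - \theta + \epsilon}$. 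Matching this against the lower bound, and using the natural identification $Q_F \asymp Q_f Q_g$ for the spin $L$-function, forces $X \ll Q_F^{1/2 - 2\theta + O(\epsilon)}$, contradicting our choice of $X$ once $c_\epsilon$ is taken large enough.

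The principal obstacle is the Rankin-Selberg lower bound: its residue contains $L(f \otimes g, 1)^2$, for which only polylogarithmic control is available, so one needs the asymmetric hypothesis $(\log Q_g)^2 \ll \log Q_f$ to prevent the $g$-dependent logs from inflating the final exponent. The remaining steps are essentially a careful repackaging of the analytic conductor $Q_F$ in terms of $Q_f$, $Q_g$, and the subconvex saving $\theta$ for the two degree-two factors $L(f,s)$, $L(g,s)$.
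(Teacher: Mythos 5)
Your upper-bound half is essentially the paper's argument (Perron/Mellin, shift to $\Re s=1/2$, subconvexity for $L(f,s)L(g,s)$, $Q_F\asymp Q_fQ_g$), up to minor bookkeeping such as the factor $\zeta_N(1+2s)^{-1}$ relating $\sum\lambda_F(n)n^{-s}$ to $L_{N_1}(f,s)L_{N_2}(g,s)$ and the bounded Euler factors at $p\mid N$. The lower-bound half, however, has a genuine gap. You assert $\sum_n\lambda_F(n)^2V(n/X)\gg X^{1-\epsilon}$ for $X\asymp Q_F^{1/2-2\theta+\epsilon}$ via the Rankin--Selberg factorisation $L(F\otimes F,s)=\zeta(s)^2L(\mathrm{sym}^2f,s)L(\mathrm{sym}^2g,s)L(f\otimes g,s)^2$ and a Wiener--Ikehara/Perron argument, but no such argument delivers this in the required range: the relevant $L$-function has degree $16$ and analytic conductor of size roughly $Q_F^4$ (in particular at least a large power of $Q_f\asymp k^2N_1$), so a contour shift to the critical line with convexity gives an error term of size about $Q_F^{1+\epsilon}X^{1/2}$, which only beats the main term when $X\gg Q_F^{2+\epsilon}$, and any zero-free-region/effective-PNT route produces errors like $X\exp(-c\sqrt{\log X})$ only once $\log X\gg(\log Q_{F\otimes F})^2$, i.e.\ $X\geq\exp(c(\log Q_f)^2)$ --- both vastly larger than $Q_F^{1/2}$. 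Positivity of $\lambda_F(n)$ up to $X$ does not rescue the second-moment asymptotic, and the residue also requires a quantitative lower bound for $L(f\otimes g,1)$, which you invoke without justification (it needs a no-Siegel-zero input of Ramakrishnan--Wang type, not merely the cuspidality of $f\otimes g$). Crucially, the hypothesis $(\log Q_g)^2\ll\log Q_f$ does not play the role you assign to it: polylogarithmic losses in $Q_F$ are harmless (they are swallowed by $Q_F^{\epsilon}$); the obstruction in your route is the conductor of the $f$-dependent factors, which that hypothesis does not control.

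The paper avoids exactly this obstruction by never requiring distributional information about $\lambda_f(p)$ or $f\otimes g$. It uses the Hecke relations $\lambda_F(p)=\lambda_f(p)+\lambda_g(p)$ and $\lambda_F(p)^2-\lambda_F(p^2)=2+\tfrac1p+\lambda_f(p)\lambda_g(p)$ together with the assumed nonnegativity of $\lambda_F(p^2)$ for $p\leq x^{1/2}$: an elementary polynomial inequality plus the effective prime number theorem for $L(\mathrm{sym}^2g,s)$ and $L(\mathrm{sym}^4g,s)$ alone (conductors $N_2^2,N_2^4$, which is where $(\log Q_g)^2\ll\log Q_f\ll\log x$ is genuinely needed) shows $|\lambda_g(p)|$ is small on a positive proportion of primes $p\leq x^{1/2}$; on such primes one gets $\lambda_F(p)\gg1$ whether $|\lambda_f(p)|$ is large (via the first relation) or small (via the second), and summing $\lambda_F(p_1p_2)=\lambda_F(p_1)\lambda_F(p_2)$ over pairs gives $\sum_{n\leq x}{}^N\lambda_F(n)\gg x/\log^2x$. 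If you want to keep a second-moment-style lower bound, you would need to supply a mechanism that works at $X\approx Q_F^{1/2}$ uniformly in the conductor of $f$; as written, that step fails and the proof does not close.
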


This result suggests that for a generic Siegel Hecke eigenform (not necessarily a lift) of degree $2$, the bound $Q_F^{1/2+\epsilon}$ could be plausible (since the Yoshida lift satisfies the Ramanujan-Petersson conjecture, which seems crucial in these problems), this is got by taking $\theta=0$ in the above theorem. Perhaps the stronger exponent $1/2-\delta$ could be true.

\begin{conj}
For an arbitrary Siegel Hecke eigenform $F$ of degree $2$ and weight $k \geq 20$, for any given $\epsilon >0$, the first negative eigenvalue $\lambda_F(n)$ occurs at $n \ll_{\epsilon} Q_{F}^{1/2+\epsilon} $, with the implied constant being absolute and depending only on $\epsilon$.
\end{conj}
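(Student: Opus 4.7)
The plan is to adapt the argument of \thmref{mthm} from Yoshida lifts to an arbitrary Siegel Hecke eigenform, with the genuinely degree four spinor $L$-function $L(s,F,\mrm{spin})$ playing the role that the factorisation $L(s,f)L(s,g)$ played there. Assume toward a contradiction that $\lambda_F(n)\ge 0$ for every $n\le X:=Q_F^{1/2+\epsilon}$, and write $\widetilde\lambda_F(n):=\lambda_F(n)/n^{(2k-3)/2}$ for the analytically normalised Hecke eigenvalues. At full level, Weissauer's proof of the Ramanujan--Petersson conjecture for $k\ge 20$ gives $|\widetilde\lambda_F(n)|\ll d_4(n)\ll_\epsilon n^{\epsilon}$; for higher squarefree levels this bound must be supplied as an additional input, as was done implicitly in \thmref{mthm}.

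The next step is to produce matching lower and upper bounds for $\sum_{n\le X}\widetilde\lambda_F(n)^2$. The Rankin--Selberg Dirichlet series $\sum_n\widetilde\lambda_F(n)^2 n^{-s}$ has a simple pole at $s=1$ with residue $\gg(\log Q_F)^{-A}$ for some absolute $A$, yielding $\sum_{n\le X}\widetilde\lambda_F(n)^2\gg X(\log Q_F)^{-A}$. On the other hand, positivity combined with Ramanujan forces $\widetilde\lambda_F(n)^2\le X^\epsilon\widetilde\lambda_F(n)$, so that
\begin{equation*}
X(\log Q_F)^{-A}\ll X^\epsilon\sum_{n\le X}\widetilde\lambda_F(n).
\end{equation*}
The linear sum on the right is then evaluated by a smooth Perron formula applied to $L(s,F,\mrm{spin})$, shifting the contour from $\Re(s)=1+\epsilon$ to $\Re(s)=1/2$. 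By cuspidality no residue is picked up, and the remaining integral is bounded by $X^{1/2}\sup_{t\in\mbb R}|L(1/2+it,F,\mrm{spin})|(1+|t|)^{-A}$.

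Combining the three estimates, the conjecture with $X=Q_F^{1/2+\epsilon}$ amounts to establishing the bound
\begin{equation*}
L(1/2+it,F,\mrm{spin})\ll_\epsilon Q_F^{1/4+\epsilon}(1+|t|)^A,
\end{equation*}
i.e.\ an improvement of the convexity estimate $Q_F^{1/2+\epsilon}$ by a factor $Q_F^{1/4}$ (the analytic conductor of the spinor $L$-function in the weight aspect being $\asymp Q_F^2$ for full level). Using only convexity this scheme recovers $n\ll Q_F^{1+\epsilon}$, essentially the Kohnen--Sengupta--Brown estimate \eqref{1bd}. The main obstacle is thus the complete absence of any subconvex bound for $\mrm{GSp}_4$ spinor $L$-functions in the weight aspect for generic, non-lift Siegel eigenforms: even a single nontrivial saving over convexity would be a substantial new result, and the halfway-to-Lindel\"of bound required here is well beyond present technology. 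A full unconditional proof likely requires genuinely new analytic input---for instance, a delta-method or period-integral attack on $L(s,F,\mrm{spin})$---or, alternatively, a reformulation bypassing $L(s,F,\mrm{spin})$ via the degree five standard $L$-function combined with the $\mrm{Sp}_4$ Hecke relations between $\lambda_F(p)$, $\lambda_F(p^2)$ and $\lambda_F(1)$.
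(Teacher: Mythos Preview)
The statement is labelled a \emph{Conjecture} in the paper and is not proved there; it is offered as a plausible extrapolation from the Yoshida-lift case handled in \thmref{mthm}. There is therefore no proof in the paper against which to compare your attempt.

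Your proposal is not a proof but a conditional reduction, and you already flag a gap yourself. Two further points deserve comment. First, the lower-bound step is itself open: the Dirichlet series $\sum_n\widetilde\lambda_F(n)^2 n^{-s}$ is not an automorphic $L$-function, and establishing a simple pole at $s=1$ with residue $\gg(\log Q_F)^{-A}$ would require, among other things, an effective lower bound for the $\mrm{GSp}_4$ adjoint $L$-value at $s=1$ with polynomial dependence on $Q_F$, which is not available in the literature with the needed uniformity. Second, your conductor normalisation is inconsistent with the paper's: here $Q_F$ is \emph{defined} to be the analytic conductor of the spinor $L$-function itself (see the discussion around \eqref{analytic conductor}), so the convexity bound on the critical line is already $L(1/2+it,F,\mrm{spin})\ll_\epsilon Q_F^{1/4+\epsilon}(1+|t|)^{1+\epsilon}$, not $Q_F^{1/2+\epsilon}$. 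With that correction, if your Rankin--Selberg lower bound were actually valid, then convexity alone would feed into your scheme to give $X\ll Q_F^{1/2+\epsilon}$, and the conjecture would follow with no subconvexity input whatsoever. In other words, the genuine obstruction in your outline is the lower bound, not the halfway-to-Lindel\"of estimate you single out; your diagnosis of where the difficulty lies is inverted.
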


Our proof of \thmref{mthm} uses the factorisation of the spinor $L$-function of $F$ as a product of two $\mrm{GL}_2$ $L$-functions, subconvexity estimates of the $\mrm{GL}_2$ $L$-functions in question, and the Hecke relations for the eigenvalues of the elliptic newforms $f,g$. The use of the subconvexity bound is not crucial for us, \thmref{mthm} with $\theta=0$ is already an improvement of Kohnen and Senupta's bound in \cite{siegel}. Like all other results on this topic, we consider upper and lower bounds for a suitable weighted sum of the eigenvalues of $F$:
\begin{align}
S(F,x) := {\sum_{n \leq x, \, (n,N)=1}} \lambda_{F}(n) \log (\frac{x}{n}) \label{sum1}
\end{align}
in terms of $Q_F$ and $x$. Using standard analytic techniques, we obtain an upper bound $Q_{F}^{1/4- \theta+\epsilon} \cdot x^{1/2}$ with an implied constant depending only on $\epsilon$. The main point is to get a suitable lower bound by exploiting the non-negativity of $\lambda_F(n)$, say up to $x$, and exploiting the Hecke relations. The two bounds combined would give the desired upper bound $Q_{F}^{1/2-2\theta+\epsilon}$ in \thmref{mthm}. Let us mention here that our method of exploiting the Hecke relations between eigenvalues is rather different from those existing in the literature for any other `Linnik-type' problem on determining the first sign change in the sequence of Hecke eigenvalues of an eigenform. See the paragraph below, and for more details, section~\ref{lbd}. 

For all $y$ such that $\log y \gg (\log \mc{L} )^2$ and any given elliptic newform  $h \in S_2(\mc L)$ ($2$ can be replaced by any fixed weight $\ge 2$), we prove a non-trivial upper bound of $\sum_{p \leq y, p \nmid \mathcal{L}}  |\lambda_{h} (p) |$   (one trivial bound is $2 \pi(y)$ with $\pi(\cdot)$ being the prime counting function, and we show that one can reduce the constant here to $11/10$); up to the best of our knowledge, this result has not been written down explicitly in the literature. This follows from the holomorphy of the symmetric power $L$-functions (cf. \cite{kimsha}) and can be used to provide point-wise upper bounds for $\lambda_h(p)$ on sets of primes with positive natural density (in an effective and explicit manner, in particular the set of primes may depend mildly on $\mc L$, see \corref{cor}). In particular we avoid using the Sato-Tate theorem not only because such an advanced machinery is not required (and so perhaps this method may generalise to other situations), but more so because we need explicit and controllable dependence on the parameter $\mc L$.

\subsection*{Acknowledements} We thank the referees for meticulous checking of the paper and for numerous comments and suggestions that improved the presentation. S.D. acknowledges financial support in parts from the UGC Centre for Advanced Studies, DST (India) and IISc. Bangalore during the completion of this work. R.P. thanks NBHM for the financial support and IISc., where this work was done. 

\section{Notation and preliminaries}\label{prelim}
\subsection{General notation}
Let $\mc A$ be a subset of $\mathbf{N}$ and $a_{n} \in \mathbf{C}$, we define 
\[  \underset{n \in \mc A}{\sum} {}^N a_n:=  \underset{ n \in \mc A, \, (n,N)=1}{\sum} a_n   . \]
Let $ u(x),v(x)$ be two real functions defined on a subset $\mc B$ of $\mathbf{R}$. Whenever we write $u(x) \ll v(x)$ or $u(x) = O(v(x))$ or $u(x) \ll_\epsilon v(x)$, it will always mean that,
\[ |u(x)| \leq M \cdot h(x), \quad \text{for all  } x \in \mc B \text{  and for some  } M >0 ,\]
and in the last case $M$ may depend on $\epsilon$. The notation $u(x)= o(v(x))$ means that 
\[ \underset{x \rightarrow \infty} \lim \frac{u(x)}{v(x)} =0 . \]

\subsection{Spinor \texorpdfstring{$L$}{}-function}
Whenever $H(s)$ is a Dirichlet series with an Euler product, having Euler factors $H_{p}(s)$ at primes $p$, we will write for $\Re s \gg 1$, $\mc L \geq 1$ and primes $p$  
\[ H_{\mathcal{L}}(s) := \underset{p \nmid \mathcal{L}} \prod H_{p}(s) \quad \text{and} \quad H^{\mathcal{L}} (s) :=  \underset{p | \mathcal{L}} \prod H_{p}(s);
\]
so that $H(s)= H_{\mathcal{L}}(s) \cdot H^{\mathcal{L}}(s)$.
Let the notation be as in the introduction. We can attach to the Yoshida lift $F \in S_{\kappa} (\Gamma_{0}^{2} (N))$ the spinor $L$-function $Z(F,s)$ (in the sense of Langlands) which is given by a certain Euler-product defined in terms of the Satake parameters of $F$. In this paper, we will always work with Euler-products away from $N$.

Useful information about the Euler factors of $Z(F,s)$ away from the level is given, for instance, in (see \cite[Prop.~6.1]{bo-pilot}). Let us consider the Euler factor $Z_{F,p}(s)$ of $Z(F,s)$ at a prime $p \nmid N$. For $\Re s>1$ we have that
\[   Z_{N}(F,s):= \underset{p \nmid N}\prod Z_{F,p}(s), \quad \text{where} \quad Z_{F,p}(s):= \underset{1\leq i \leq 4}\prod (1 - \beta_{i,p} p^{-s})^{-1}. \] 
Here $\beta_{1,p} := \alpha_{0,p}$, $\beta_{2,p} := \alpha_{0,p} \alpha_{1,p}$, $\beta_{3,p} := \alpha_{0,p} \alpha_{2,p}$ , $\beta_{4,p} := \alpha_{0,p} \alpha_{1,p} \alpha_{2,p}$, and the complex numbers $\alpha_{0,p},\alpha_{1,p},\alpha_{2,p}$ are the Satake parameters of $F$ at $p$ (see \cite{brown}). We normalise $Z(F,s)$ by substituting $s$ by $s+k-3/2$ in $Z(F,s)$. Through this choice of normalisation we have $\alpha_{0,p}^2 \alpha_{1,p} \alpha_{2,p}=1$.

The main information that we require about a Yoshida-lift is the following. Firstly, for $(n,N)=1$, $Z_{N}(F,s)$ is related to the eigenvalues $\lambda_F(n)$ of the Hecke operator $T(n)$ acting on $S_{\kappa} (\Gamma_{0}^{2} (N))$ by the following relation  (see \cite[Prop.~4.4]{brown})
\begin{equation} \label{naive-Spinor}
\underset{n \in \mathbf{N}} \sum {}^N \frac{\lambda_{F}(n)}{n^{s}} = \frac{Z_{N}(F,s)}{\zeta_{N}(1+2s)},
\end{equation}
where $ \zeta_{N}(s) := \underset{p \nmid N} \prod (1-p^{-s})^{-1}$ for $\Re s >1$. Note that the choice of normalisation of $Z(F,s)$ does not affect the sign of $\lambda_{F}(n)$ for any $n \in \mf N$. 

Secondly, letting $\lambda_{f}(n), \lambda_{g}(n)$ denote the normalised Hecke eigenvalues and $L(f,s), L(g,s)$ denote the normalised $L$-functions of $f$ and $g$, so that their functional equations relate $s$ with $1-s$:
\[  L(f,s) := \sum_{n=1}^\infty \lambda_f(n) n^{-s}, \q   L(g,s) := \sum_{n=1}^\infty \lambda_g(n) n^{-s} ; \]
we have the relation (see \cite[Prop.~3.1]{saha})
\begin{equation} \label{l-fn}
Z_{N} (F, s) = L_{N_1}(f,s) \cdot L_{N_2}(g,s) = \frac{L(f,s)}{L^{N_1}(f,s)} \cdot \frac{L(g,s)}{ L^{N_2}(g,s)},
\end{equation}
where $L^{N_1}(f,s)$ and $L^{N_2}(g,s)$ are given by
\[L^{N_1}(f,s) :=\underset{p | N_{1}} \prod (1- \lambda_{f}(p) p^{-s})^{-1} \quad \text{and} \quad  L^{N_2}(g,s) :=\underset{p | N_{2}} \prod (1- \lambda_{g}(p) p^{-s})^{-1}. \]

\subsection{Analytic conductor} Let $L(h,s)=  \sum_{n \geq 1} \lambda_{h}(n) n^{-s}$ be a normalised `$L$-function' of a modular form $h$ in the sense of \cite[Chap.~5]{iwaniec}. Further assume that $L(h,s)$ has an Euler product of degree $d$ with the $p$-Euler polynomial given by $\prod_{1 \leq i \leq d}(1-\alpha_i(p)X)$ for some complex numbers $\alpha_i(p)$ for all $p$. The completed $L$-function
\[
\Lambda(h,s)= q(h)^{\frac{s}{2}} \pi^{\frac{-ds}{2}} \prod_{j=1}^{d} \Gamma(\frac{s+k_{j}}{2}) L(h,s)
\]
satisfies a functional equation relating $s$ with $1-s$, has meromorphic continuation to $\mf C$, where $q(h) \geq 1$ is the arithmetic conductor and  $k_{j} \in \mathbf{C}$. When $p \nmid q(h)$, one has $\alpha_i(p)<p$. Then we define the analytic conductor (see \cite{iwaniec} for a more detailed discussion) $Q_{h}$ of $L(h,s)$ (or $h$ for brevity) as 
\[ Q_{h} := q(h) \prod_{j=1}^{d} (|k_j|+3).\]

For $f \in S_{k}(N_{1})$ and $g \in S_{2}(N_2)$ it is well-known that $Q_{f} \asymp k^2 N_{1}$ and $Q_{g} \asymp N_{2}$. Letting $Q_{F}$ denote the analytic conductor of $F$, from \eqref{l-fn} we have $Q_F=Q_f\cdot Q_g$. So
\begin{equation} \label{analytic conductor}
Q_{F} \asymp k^{2} N_{1} N_{2}.
\end{equation}

For $x \geq 1$, we put
\begin{equation} \label{sfx}
S(F,x) := \underset{n \leq x} \sum {}^N \lambda_{F}(n) \log (\frac{x}{n}) .
\end{equation}

\section{Upper and lower bounds for \texorpdfstring{$S(F,x)$}{}}
Let $\lambda_{F}(n) \geq 0$ for all $n \leq x$. We will estimate $x$ by comparing the upper and lower bounds of $S(F,x)$. We will first work with $f$, $g$ and finally transfer everything to $F$ using \eqref{l-fn}.

\subsection{Upper bound}
Let $Q_{h}$ be the analytic conductor of an elliptic Hecke newform $h$. From the subconvexity bound for $\mrm{GL}_{2}$ $L$-functions (see \cite[Theorem 1.1]{subconvexity}) we have for any $t \in \mf R$,
\begin{equation} \label{subconvexity}
| L(h, \frac{1}{2} +it) | \ll Q_{h}^{1/4-\theta} | \frac{1}{2}+it |^{1/2-2\theta} ,
\end{equation}
for some $1/4> \theta >0$.
From Perron's formula, \eqref{naive-Spinor} and \eqref{l-fn} we can write
\begin{equation} \label{Perron}
\underset{n \leq x} \sum {}^N \lambda_{F}(n) \log(\frac{x}{n}) = \frac{1}{2 \pi i } \int_{(2)} \frac{1}{\zeta_{N}(1+2s)} L_{N_1}(f,s)  L_{N_2}(g,s) \frac{x^{s}}{s^2} ds.
\end{equation}
For $p | N_1$, we have $ |\lambda_{f} (p)| \leq 1$ (see Theorem 3, \cite{li}). Also recall that $N_1$ is squarefree. Let us now note the following estimate that will be used in the next paragraph.
\begin{equation} \label{G}
|L^{N_1}(f, \frac{1}{2}+it)|^{-1} = \underset{p | N_{1}} \prod |1- \lambda_{f}(p) p^{-\frac{1}{2}-it}| \leq \underset{p | N_{1}} \prod (1+ \frac{|\lambda_{f}(p)|}{p^{\frac{1}{2}}}) \leq \underset{d | N_{1}} \sum \frac{1}{d^{\frac{1}{2}}} = O_{\epsilon}( N_{1}^{\epsilon}).
\end{equation}
Similarly we get, $|L^{N_2}(g, \frac{1}{2} + it)|^{-1}= O_{\epsilon}(N_2^{\epsilon})$.

From \eqref{l-fn}, \eqref{subconvexity}, \eqref{G} and the fact that $\zeta_{N}(2+2it)^{-1}$ is absolutely bounded, it is clear that the integral in \eqref{Perron} is absolutely convergent on the critical line $ \Re s= \frac{1}{2} $. Shifting the line of integration to $\Re (s)= \frac{1}{2}$ gives (the bound in \eqref{subconvexity} implies that the horizontal integrals do not contribute)
\[  S(F,x) = \frac{1}{2 \pi i }  \int_{(\frac{1}{2})}
\frac{1}{\zeta_{N}(1+2s)} L_{N_1}(f,s) L_{N_2}(g,s) \frac{x^{s}}{s^2} ds.
\]
Now again using \eqref{l-fn}, \eqref{analytic conductor}, \eqref{subconvexity}, \eqref{G} we estimate in a standard way that
\begin{equation} \label{upbd}
S(F,x) \ll_{\epsilon} Q_{F}^{\frac{1}{4}- \theta+ \epsilon} x^{\frac{1}{2}}.
\end{equation}

\begin{rmk}
One has (see Theorem I.5.5,\cite{tenenbaum}), $\underset{d | N_1} \sum \frac{1}{d^{\frac{1}{2}}} \leq e^{2+o(1) \frac{\log^{\frac{1}{2}}N_{1}}{\log_{2} N_{1}}}$. So we would achieve a slightly better bound using this inequality. But for simplicity we are using the bound $N_{1}^{\epsilon}$ here.
\end{rmk}

\subsection{Lower bound} \label{lbd}
From \eqref{naive-Spinor} and \eqref{l-fn}, comparing the Euler factors we have that for $p \nmid  N$
\begin{equation} \label{sum of two}
\lambda_{F}(p) =\lambda_{f}(p) + \lambda_{g}(p) \quad \text{and}
\end{equation}
\[
\lambda_{F}(p^2) = \lambda_{f}(p^2)+ \lambda_{g}(p^2)+ \lambda_{f}(p)\lambda_{g}(p)- \frac{1}{p}.
\]
Hence, for $p \nmid N$, using the Hecke relation
\[\lambda_{f}(p^2) =\lambda_{f}(p)^2 -1 \quad \text{and} \quad \lambda_{g}(p^2) =\lambda_{g}(p)^2 -1, \]
we get from \eqref{sum of two}
\begin{equation} \label{lam}
\lambda_{F}(p)^2 - \lambda_{F}(p^2) =2+\frac{1}{p}+\lambda_{f}(p)\lambda_{g}(p).
\end{equation}
We look for lower bounds for $\lambda_{F}(p)$ from \eqref{lam}, exploiting the nonnegativity of $\lambda_{F}(p^2)$ for $p \leq x^{1/2}$. This leads us to look for a sizeable set of primes on which both $\lambda_{f}(p),\lambda_{g}(p)$ are small. To this end,  we shall first prove the following lemma which will lead us to the required lower bound. See \cite[Lemma 3.1~(iv)]{km} for a result related to this lemma (where a lower bound version has been done).
\begin{lem} \label{mod lambda}
Let $\pi(y, \mathcal{L}) := \# \{ p: p \leq y, p \nmid \mathcal{L} \}$. Let $h \in S_{\kappa} (\mathcal{L})$ be a newform and $\lambda_{h}(n)$ denote its normalised Fourier coefficients. Then for any $y \geq 2$, we have
\[\underset{p \leq y} \sum {}^{\mathcal{L}} | \lambda_{h}(p) | \leq \underset{p \leq y} \sum {}^{\mathcal{L}}(\frac{11}{10} -\frac{57}{1000} \lambda_{h}(p^4)+ \frac{399}{1000} \lambda_{h} (p^2)) .\] 
\end{lem}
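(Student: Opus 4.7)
The lemma is, at heart, a pointwise inequality followed by summation, and the only number-theoretic input required is Deligne's unconditional Ramanujan--Petersson bound for holomorphic newforms. For each prime $p \nmid \mathcal{L}$ I would write $\lambda_h(p) = 2\cos\theta_p$ with $\theta_p \in [0,\pi]$, so that the Hecke recursion $\lambda_h(p^{n+1}) = \lambda_h(p)\lambda_h(p^n) - \lambda_h(p^{n-1})$ identifies $\lambda_h(p^n)$ with the Chebyshev polynomial of the second kind $U_n(\cos\theta_p)$. In particular, writing $x := \cos\theta_p \in [-1,1]$,
\begin{equation*}
\lambda_h(p) = 2x,\quad \lambda_h(p^2) = 4x^2 - 1,\quad \lambda_h(p^4) = 16x^4 - 12x^2 + 1.
\end{equation*}

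Substituting these expressions reduces the claimed inequality, term by term over $p$, to the elementary polynomial estimate
\begin{equation*}
\psi(x) := \tfrac{644}{1000} + \tfrac{2280}{1000}x^2 - \tfrac{912}{1000}x^4 - 2|x| \ \geq \ 0 \qquad \text{for } x \in [-1, 1].
\end{equation*}
By the symmetry $x \mapsto -x$ it suffices to treat $x \in [0,1]$. I would verify $\psi \geq 0$ here by showing that $\psi'(x) = \tfrac{4560}{1000}x - \tfrac{3648}{1000}x^3 - 2$ is strictly negative throughout $[0,1]$: the cubic $\tfrac{4560}{1000}x - \tfrac{3648}{1000}x^3$ attains its maximum on $[0,1]$ at $x_\star = \sqrt{5/12}$ (the unique root of its derivative), and a direct computation gives the maximal value $\tfrac{38\sqrt{15}}{75} < 2$, the last inequality being equivalent to $15 < (75/19)^2 = 5625/361$. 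Hence $\psi$ is decreasing on $[0,1]$ and attains its minimum $\psi(1) = \tfrac{12}{1000} > 0$ at the right endpoint, which finishes the pointwise inequality. Summing over primes $p \leq y$ with $p \nmid \mathcal{L}$ then yields the lemma.

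\textbf{Main point / expected difficulty.} The proof itself is a short calculation once the coefficients $\bigl(\tfrac{11}{10},\,-\tfrac{57}{1000},\,\tfrac{399}{1000}\bigr)$ are given; there is no real obstacle. The actual content lies in their \emph{choice}: they approximately solve the linear-programming problem of minimising the constant $a$ subject to $2|x| \leq a + c\,U_2(x) + b\,U_4(x)$ on $[-1,1]$, the tiny slack $\tfrac{12}{1000}$ at $x=1$ signalling near-optimality. This near-optimality is precisely what makes the lemma useful downstream: after summing and invoking PNT-type cancellation in $\sum_p \lambda_h(p^{2j})$ (which comes from the holomorphy and non-vanishing of $L(\mathrm{Sym}^{2j}h,s)$ as in the Kim--Shahidi results), the main term becomes $\tfrac{11}{10}\,\pi(y,\mathcal{L})$, which strictly improves on the trivial bound $2\pi(y,\mathcal{L})$.
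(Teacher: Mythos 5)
Your proof is correct and follows essentially the same route as the paper: both reduce the lemma to the pointwise inequality $t \leq \tfrac{11}{10}-\tfrac{57}{1000}(t^4-3t^2+1)+\tfrac{399}{1000}(t^2-1)$ on the Ramanujan range (your Chebyshev variable $x$ is just the change of variable $t=2|x|$) and verify it by showing the derivative stays negative (the critical point $\sqrt{5/12}$, i.e.\ $t=\sqrt{5/3}$, giving a maximum below the linear slope) together with positivity at the right endpoint. The paper merely adds a discussion of how the coefficients were found via a small nonlinear-programming search, which you correctly identify as the real content.
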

\begin{proof}
We know that for $p \nmid \mathcal{L}$, the Ramanujan bound for $|\lambda_{h}(p)|$ is $2$ and
\[\lambda_{h}(p^2) =\lambda_{h}(p)^2 -1 \quad and \quad \lambda_{h}(p^4)=\lambda_{h}(p)^4 -3 \lambda_{h}(p)^2 +1.\]
Before proceeding further, let us first discuss the idea of the proof. If we can find $\delta, \alpha, \beta \in \mathbf{R}$, with $\delta > 0 $  as small as possible, such that
\begin{equation} \label{ineq}
t \leq \delta +\alpha (t^4-3t^2+1) + \beta(t^2-1)
\end{equation}
for all $0 \leq t \leq 2$, then we would have that
\begin{equation} \label{final}
\underset{p \leq y} \sum {}^{\mathcal{L}} | \lambda_{h}(p) | \leq  \underset{p \leq y} \sum {}^{\mathcal{L}} (\delta+ \alpha \lambda_{h}(p^4) +\beta \lambda_{h}(p^2)).
\end{equation}
We put $\beta= \alpha \Upsilon$ and rewrite the polynoimial in the right hand side of \eqref{ineq} as 
\begin{equation}
q(t):= \delta + \alpha (t^4+(-3+\Upsilon) t^2 +1- \Upsilon).
\end{equation}
Let us also define $r(t):= q(t)- t$. We want to find $\alpha, \Upsilon$, such that $r(t)>0$ for all $t \in [0,2]$. Now note that if the derivative of $r(t)$ has no root in $ (0,2)$ and if $r(0),r(2)>0$, then $r(t)>0$ for all $t \in [0,2]$. We have
\[r'(t)= 4 \alpha t^3 +2 \alpha t (-3+ \Upsilon)- 1.\]
For  given $\alpha, \Upsilon$, when $t$ is very close to zero, $r'(t)$ is negative valued. Hence $r'(t)$ has to be negative valued for all $t \in (0,2)$. To ensure this, we want to see, whether there exists $\alpha$, $\Upsilon$, such that $r'(t)$ has a maximum in $t>0$ (note, as a degree $3$ polynomial it can have at most one maximum) and at the point of maximum, $r'(t)$ is negative. We observe that \textit{if $\alpha$ is negative and $\Upsilon < 3$}, then $r'(t)$ has a maximum at $t= (\frac{3- \Upsilon}{6})^{1/2}$. To  ensure that $r'(t)$ is negative for all $t \in (0,2)$ we also want $r'(t)|_{(\frac{3- \Upsilon}{6})^{\frac{1}{2}}} <0$. This yields to the condition
\begin{equation} \label{b}
-8 \alpha < \frac{6^{\frac{3}{2}}}{(3- \Upsilon)^{\frac{3}{2}}}.
\end{equation}
The conditions $q(0) >0$ and $q(2) >2$ are equivalent to
\begin{equation} \label{c}
\delta+(1- \Upsilon) \alpha >0 , \quad \delta+(5+3 \Upsilon) \alpha >2.
\end{equation} 
Using techniques from non-linear programming we get many solutions to this set of simultaneous inequalities \eqref{b}, \eqref{c} together with the condition $\alpha < 0$ and $\Upsilon < 3$. We take the solution
\[ \delta= \frac{11}{10}, \quad \alpha= - \frac{57}{1000}, \quad \Upsilon= -7. \]
Hence we get
\[ \frac{11}{10} -\frac{57}{1000} \lambda_{h} (p^4) +\frac{399}{1000} \lambda_{h}(p^2) \geq | \lambda_{h}(p) | \]
for all $p \nmid \mathcal{L}$. Hence the lemma follows.
\end{proof}

\begin{rmk}
$\frac{11}{10}$ is not the optimal choice for $\delta$. Since the optimal value for $\delta$ improves the lower bound of $S(F,x)$ only up to a constant, we keep $\delta= \frac{11}{10}$.
\end{rmk}

Recall that $Q_{g} \asymp N_2$. Let $L(sym^2 g,s)$ and $L(sym^4 g, s)$ be the symmetric square and symmetric fourth power $L$-functions associated with $g$. It is well known that (see e.g., \cite[chapter 5.1, 5.12]{iwaniec})
\newline
(i) $Q_{sym^2 g} \asymp N_{2}^2$ and $Q_{sym^4 g} \asymp N_{2}^4$ and \newline
(ii) $\lambda_{g}(p^2),\lambda_{g}(p^4)$ are $p$-th coefficients of the Dirichlet series which represent $L(sym^2 g,s)$ and $L(sym^4 g, s)$ respectively.

\begin{prop} \label{propo}
There exists an absolute constant $c_1>0$ such that if $c_1 \log y \geq  (\log Q_{g})^2$, then one has 
\[ \underset{p \leq y} \sum {}^{N_2} |\lambda_{g} (p)| \leq \left( \frac{11}{10}+ O(\frac{1}{\log y}) \right) \pi(y, N_2) .\]
\end{prop}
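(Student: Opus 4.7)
The plan is to combine Lemma~\ref{mod lambda} (applied to $g$ with $\mathcal{L}=N_2$) with an effective prime-number theorem for the symmetric-square and symmetric-fourth-power $L$-functions of $g$. The lemma immediately yields
\[
\underset{p \leq y} \sum {}^{N_2} |\lambda_g(p)| \leq \frac{11}{10}\pi(y,N_2) - \frac{57}{1000} \underset{p \leq y} \sum {}^{N_2} \lambda_g(p^4) + \frac{399}{1000}\underset{p \leq y} \sum {}^{N_2} \lambda_g(p^2),
\]
so the proposition reduces to proving, for $k=1,2$, the estimate
\[
\underset{p \leq y} \sum {}^{N_2} \lambda_g(p^{2k}) \ll \frac{\pi(y,N_2)}{\log y}
\]
under the hypothesis $c_1 \log y \geq (\log Q_g)^2$, with $c_1>0$ to be chosen sufficiently small.

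For this I would use that $L(\mathrm{sym}^2 g, s)$ (Gelbart--Jacquet) and $L(\mathrm{sym}^4 g, s)$ (Kim--Shahidi, \cite{kimsha}) are entire automorphic $L$-functions of analytic conductor $\asymp Q_g^{2k}$ whose $p$-th Dirichlet coefficient (for $p \nmid N_2$) coincides with $\lambda_g(p^{2k})$, as noted in facts (i)--(ii) preceding the proposition. Each admits a classical de la Vall\'ee-Poussin zero-free region $\sigma > 1 - c/\log(Q_{\mathrm{sym}^{2k}g}(|t|+2))$ with an absolute constant $c>0$, and a standard contour shift via Perron's formula produces
\[
\sum_{n \leq y} \Lambda_{\mathrm{sym}^{2k}g}(n) \ll y \exp\!\left(-c'\sqrt{\tfrac{\log y}{\log Q_g}}\right),
\]
where $\Lambda_{\mathrm{sym}^{2k}g}(n)$ is the $n$-th coefficient of $-L'/L(\mathrm{sym}^{2k}g,s)$. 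Under our hypothesis, choosing $c_1$ small makes this error $\ll y/(\log y)^3$. Removing the contribution of proper prime powers $n=p^m$ with $m\geq 2$ (bounded by $O(\sqrt{y})$ via Deligne's bound) and of primes dividing $N_2$ (bounded by $O(\log N_2 \log y)$, which is absorbed by the hypothesis), then applying partial summation, gives $\underset{p \leq y}\sum{}^{N_2}\lambda_g(p^{2k}) \ll y/(\log y)^3 \ll \pi(y,N_2)/\log y$, as required.

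The principal technical point is the uniformity in $N_2$ of the zero-free region for $L(\mathrm{sym}^4 g, s)$; for $\mathrm{sym}^2$ this is classical (of Hoffstein--Ramakrishnan type), and for $\mathrm{sym}^4$ it follows from the same template once the holomorphy, functional equation, and gamma-factor data supplied by Kim--Shahidi are in hand, but the dependence of the zero-free-region constant on the level must be tracked carefully. The quadratic scaling $\log y \gg (\log Q_g)^2$ in the hypothesis is calibrated exactly so that $\sqrt{\log y/\log Q_g} \gg \log\log y$, which is what converts the PNT exponential error into a saving stronger than $(\log y)^{-1}$; any hypothesis of the form $\log y \gg (\log Q_g)^A$ with $A<2$ would fail to produce an error term of this shape, while the optimality of the constant $11/10$ is inherited from Lemma~\ref{mod lambda} and is independent of the PNT step.
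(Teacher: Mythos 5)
Your argument is essentially the paper's: you too reduce via Lemma~\ref{mod lambda} and then invoke an effective prime number theorem for $L(\mathrm{sym}^2 g,s)$ and $L(\mathrm{sym}^4 g,s)$ uniform in the level (the paper quotes the PNT for $L$-functions from Iwaniec--Kowalski, whose error terms $\ll N_2\, y\exp(-c_0\sqrt{\log y})$ and $\ll N_2^2\, y\exp(-c_0\sqrt{\log y})$ have polynomial conductor dependence that the hypothesis $c_1\log y\ge(\log Q_g)^2$ absorbs), followed by Abel summation, so the two proofs coincide in substance. Only your closing optimality remark is off: with your error shape $y\exp\bigl(-c'\sqrt{\log y/\log Q_g}\bigr)$ any hypothesis $\log y\gg(\log Q_g)^{A}$ with $A>1$ already beats every power of $\log y$, so the threshold $A=2$ is dictated by the polynomial conductor dependence in the cited PNT rather than by your bound; this side comment does not affect the correctness of the proof.
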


\begin{proof}
From the holomorphy and non-vanishing at $s=1$ of the symmetric square and the symmetric fourth power $L$-functions (see \cite{kimsha}, \cite[chapter 5.12]{iwaniec}) and the prime number theorem of $L$-functions (see \cite{iwaniec}, pp 110--111), we have for some absolute constant $c_0 >0$ that
\[\underset{p \leq y} \sum {}^{N_2} \lambda_{g}(p^2) \log p = O(N_2 \, y \, \exp (-c_0 \, \sqrt{\log y} ))  \quad \text{and} \]
\[ \underset{p \leq y} \sum {}^{N_2} \lambda_{g}(p^4) \log p = O(N_{2}^2 \, y \, \exp (-c_0 \, \sqrt{\log y} )). \]
So, when $N_2 \ll \exp ( \tfrac{c_0}{4} \sqrt{\log y} )$ using Abel's  summation formula we obtain
\[\underset{p \leq y} \sum {}^{N_2} \lambda_{g}(p^2) =O(\frac{y}{\log^{2} y}) \quad \text{and} \quad \underset{p \leq y} \sum {}^{N_2} \lambda_{g}(p^4) =O(\frac{y}{\log^{2} y}) .\]
The proposition now follows immediatey from \lemref{mod lambda}.
\end{proof}

For any $\gamma>0$, let us define $ V(y, \gamma) := \{p : p \leq y, p \nmid {N_2}, | \lambda_{g}(p) | \leq \gamma \}. $

\begin{cor} \label{cor}
Let the assumptions be as in \propref{propo}. Then at least one of these two following inequalities  holds true:
 \[ \frac{ \# V(y, \frac{19}{20})}{ \pi(y, N_2) } \geq \frac{1}{100} \quad \text{or} \quad \frac{ \#  V(y, \frac{13}{10}) }{\pi(y,N_2)} \geq \frac{51}{100}. \]
\end{cor}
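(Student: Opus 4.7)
The plan is to argue by contradiction. Assume both inequalities fail simultaneously, i.e.
\[
\# V(y,\tfrac{19}{20}) < \tfrac{1}{100}\,\pi(y,N_2) \quad \text{and} \quad \# V(y,\tfrac{13}{10}) < \tfrac{51}{100}\,\pi(y,N_2),
\]
and derive an inequality that conflicts with the upper bound in \propref{propo}.

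To this end, partition the primes $p \le y$ with $p \nmid N_2$ into three disjoint classes: $\mc A := V(y,\tfrac{19}{20})$; $\mc B := V(y,\tfrac{13}{10}) \setminus V(y,\tfrac{19}{20})$, on which $|\lambda_g(p)| > \tfrac{19}{20}$; and $\mc C$, the complement, on which $|\lambda_g(p)| > \tfrac{13}{10}$. The two failure assumptions translate to $\#\mc A < \tfrac{1}{100}\pi(y,N_2)$ and $\#\mc A + \#\mc B < \tfrac{51}{100}\pi(y,N_2)$, hence $\#\mc C > \tfrac{49}{100}\pi(y,N_2)$. Discarding the contribution from $\mc A$ and using the pointwise lower bounds on $\mc B$ and $\mc C$, I obtain
\[
\underset{p \leq y}\sum{}^{N_2} |\lambda_g(p)| \;>\; \tfrac{19}{20}\,\#\mc B + \tfrac{13}{10}\,\#\mc C.
\]
A short linear-programming check (the right-hand side is minimised, under our constraints, when $\#\mc A$ and $\#\mc B$ are as large as possible and $\#\mc C$ as small as possible, i.e.\ at $\#\mc A \approx \tfrac{\pi}{100}$, $\#\mc B \approx \tfrac{50\pi}{100}$, $\#\mc C \approx \tfrac{49\pi}{100}$) then gives
\[
\underset{p \leq y}\sum{}^{N_2} |\lambda_g(p)| \;>\; \Bigl(\tfrac{19}{20}\cdot\tfrac{50}{100} + \tfrac{13}{10}\cdot\tfrac{49}{100}\Bigr)\pi(y,N_2) \;=\; \tfrac{1112}{1000}\,\pi(y,N_2).
\]

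On the other hand, \propref{propo} yields the upper bound $\bigl(\tfrac{11}{10} + O(\tfrac{1}{\log y})\bigr)\pi(y,N_2) = \bigl(\tfrac{1100}{1000} + O(\tfrac{1}{\log y})\bigr)\pi(y,N_2)$. Since the slack $\tfrac{12}{1000}$ is strictly positive and absolute, the hypothesis $c_1 \log y \ge (\log Q_g)^2$ (enlarging the absolute constant $c_1$ if necessary so that the $O(1/\log y)$ term is dominated by, say, $\tfrac{6}{1000}$) forces $\tfrac{1112}{1000} \le \tfrac{1106}{1000}$, which is absurd. This contradiction proves the corollary.

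The only delicate point is the numerical one: the lower bound produced by the partition argument must strictly exceed $\tfrac{11}{10}$ by a margin large enough to absorb the $O(1/\log y)$ error in \propref{propo}. The thresholds $\tfrac{19}{20}$ and $\tfrac{13}{10}$ and the density thresholds $\tfrac{1}{100}$ and $\tfrac{51}{100}$ are precisely tuned so that the weighted sum $\tfrac{19}{20}\cdot\tfrac{50}{100} + \tfrac{13}{10}\cdot\tfrac{49}{100}$ exceeds $\tfrac{11}{10}$ with a cushion of about one percent, which is comfortably absorbed for $y$ sufficiently large (as is already guaranteed in applications of the corollary).
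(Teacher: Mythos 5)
Your proof is correct and follows essentially the same route as the paper: assume both inequalities fail, partition the primes by the two thresholds, bound the sum below by $\frac{19}{20}\cdot\frac{50}{100}+\frac{13}{10}\cdot\frac{49}{100}=\frac{1112}{1000}$ times $\pi(y,N_2)$, and contradict the $\frac{11}{10}+O(1/\log y)$ upper bound of \propref{propo}. The only quibble is your remark about \emph{enlarging} $c_1$ to absorb the $O(1/\log y)$ term: to force $y$ large through the hypothesis $c_1\log y\ge(\log Q_g)^2$ one should shrink $c_1$ (or simply require $y\ge y_0$ absolute, as the paper implicitly does by choosing $y$ large enough), but this is cosmetic and does not affect the argument.
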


\begin{proof}
We appeal to \propref{propo} and choose $y$ large enough so that the $O(1/\log y)$ term is less than $1/1000$. Suppose none of the the inequalities mentioned above holds. From the negation of the first inequality we get
\begin{equation} \label{one}
\frac{ \# \{p : | \lambda_{g}(p) | > \frac{19}{20}, p \leq y, p \nmid N_2 \}}{ \pi(y, N_2) } > \frac{99}{100}.
\end{equation}
From the negation of second inequality we get
\begin{equation} \label{two}
\frac{ \# \{p : | \lambda_{g}(p) | > \frac{13}{10}, p \leq y ,p \nmid N_2 \}}{  \pi(y, N_2) } > \frac{49}{100}. 
\end{equation}
Hence, combining \eqref{one} and \eqref{two} we get
\[ \underset{p \leq y} \sum {}^{N_2} | \lambda_{g} (p) | > (\frac{49}{100} \cdot \frac{13}{10} + \frac{50}{100} \cdot \frac{19}{20})  \pi(y, N_2) = \frac{1112}{1000} \pi(y, N_2). \]
This is a contradiction with \propref{propo} and hence the corollary follows.   
\end{proof}

We note that
\[ S(F,x)= \underset{n \leq x} \sum {}^N \lambda_{F}(n) \log(\frac{x}{n}) \gg  \underset{n \leq \frac{x}{2}} \sum {}^N \lambda_{F} (n). \]
Thus it is enough to find a lower bound for $\sum_{n \leq x, (n, N)=1} \lambda_{F} (n)$. Recall that $N=\mrm{lcm}(N_1,N_2)$.

\begin{prop} \label{lwbdprop}
Let $c_1$ be the same absolute constant as in \propref{propo} and let $x \geq 0$ be such that $\tfrac{c_1}{2}  \log x \geq (\log Q_{g})^2$. Then we have, under the assumption that $\lambda_F(n) \geq 0$ for all $n \leq x$, that
\begin{align} \label{lwbd}
\underset{n \leq x} \sum {}^N \lambda_{F} (n) \gg \frac{x}{\log^{2} x} .
\end{align} 
\end{prop}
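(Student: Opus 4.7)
The plan is to apply \corref{cor} at $y = x^{1/2}$; the hypothesis $\tfrac{c_1}{2}\log x \geq (\log Q_g)^2$ is precisely the inequality $c_1 \log y \geq (\log Q_g)^2$ required there. This furnishes an absolute $\gamma \in \{19/20,\; 13/10\}$ and a set
\[
T \;\subseteq\; \{\, p \leq \sqrt x \, : \, p \nmid N,\; |\lambda_g(p)| \leq \gamma\,\}
\]
with $|T| \gg \sqrt x /\log x$ (after discarding the $O(\log N)$ primes dividing $N_1$, which is negligible in the regime where the proposition is non-trivial). The next task is to prove a pointwise lower bound $\lambda_F(p) \geq c_0$ for some absolute $c_0>0$ valid on all of $T$, and then to combine this with the multiplicativity of Hecke eigenvalues on integers coprime to $N$.

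For the pointwise bound, start from \eqref{sum of two} and \eqref{lam}. Writing $s := \lambda_F(p) = \lambda_f(p) + \lambda_g(p) \geq 0$ and substituting $\lambda_f(p) = s - \lambda_g(p)$ into \eqref{lam}, the non-negativity $\lambda_F(p^2) \geq 0$ (available since $p^2 \leq x$) gives
\[
s^2 - s\,\lambda_g(p) + \lambda_g(p)^2 \;\geq\; 2 + \tfrac{1}{p}.
\]
In Case~1 of \corref{cor} (i.e.\ $\gamma = 19/20 < 1$), the crude bound $\lambda_f(p)\lambda_g(p) \geq -2\gamma > -2$ already yields $s^2 \geq 1/10$, so $s \geq 1/\sqrt{10}$. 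In Case~2 (i.e.\ $\gamma = 13/10$) this crude estimate fails, and instead one views the displayed inequality as a quadratic in $s$ with roots $s_\pm(p) = \bigl(\lambda_g(p) \pm \sqrt{8 - 3\lambda_g(p)^2 + 4/p}\bigr)/2$. Since $|\lambda_g(p)| \leq 13/10 < \sqrt 2$ forces $s_-(p) < 0$, the constraint $s \geq 0$ forces $s \geq s_+(p)$. Minimising $s_+(p)$ over $|\lambda_g(p)| \leq 13/10$, with the worst case $\lambda_g(p) = -13/10$, yields $s \geq (\sqrt{293}-13)/20 - O(1/p) =: c_0 > 0$ for $p$ sufficiently large.

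With the pointwise bound in hand, the endgame exploits multiplicativity. Since $F$ is a Siegel Hecke eigenform, $\lambda_F(p_1 p_2) = \lambda_F(p_1)\lambda_F(p_2)$ for any distinct primes $p_1 \neq p_2$ with $(p_1 p_2, N)=1$; moreover every $p_1, p_2 \in T$ satisfies $p_1 p_2 \leq \sqrt x \cdot \sqrt x = x$ and $(p_1p_2,N)=1$. Dropping every other (non-negative) term of the sum using the hypothesis $\lambda_F(n) \geq 0$ for $n \leq x$ gives
\[
\underset{n \leq x}{\sum} {}^N \lambda_F(n) \;\geq\; \sum_{\substack{p_1, p_2 \in T \\ p_1 < p_2}} \lambda_F(p_1)\lambda_F(p_2) \;\geq\; c_0^2 \binom{|T|}{2} \;\gg\; \frac{x}{(\log x)^2},
\]
which is the claimed bound. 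The main obstacle is the pointwise estimate in Case~2 of \corref{cor}: the naive bound $|\lambda_f(p)\lambda_g(p)| \leq 26/10$ renders the right-hand side of \eqref{lam} negative and useless, so the extra input one must exploit is precisely $\lambda_F(p) \geq 0$, fed into the quadratic analysis sketched above to extract a positive pointwise lower bound on $\lambda_F(p)$.
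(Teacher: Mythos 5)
Your proposal is correct and follows essentially the same route as the paper: apply \corref{cor} at $y=\sqrt{x}$, deduce a positive absolute pointwise lower bound for $\lambda_F(p)$ on the resulting set of primes from \eqref{sum of two}, \eqref{lam} and the non-negativity of $\lambda_F(p)$ and $\lambda_F(p^2)$, and then sum $\lambda_F(p_1p_2)$ over pairs of distinct primes to get $\gg x/\log^2 x$. The only difference is cosmetic: in the $\gamma=13/10$ case the paper splits according to whether $|\lambda_f(p)|\geq 14/10$ (using \eqref{sum of two} or \eqref{lam} respectively), whereas you eliminate $\lambda_f(p)$ and solve the resulting quadratic inequality in $\lambda_F(p)$; both give an absolute constant, and your explicit discarding of the primes dividing $N_1$ is, if anything, slightly more careful than the paper's treatment.
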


\begin{proof}
We appeal to \corref{cor} with $y= x^{\frac{1}{2}}$ (the assumption that $\tfrac{c_1}{2}  \log x \geq (\log Q_{g})^2$ allows us to choose $y=x^{\frac{1}{2}}$). As the number of distinct prime factors of $N_2$ is at most $\log N_2 / \log 2$, we get
\[ \pi(x^{\frac{1}{2}}, N_2) \geq \pi (x^{\frac{1}{2}})- \frac{\log N_2}{\log 2}  \gg \frac{x^{\frac{1}{2}}}{\log x}.\] 
If $g$ satisfies the first inequality in \corref{cor}, then one has $\# V(x^{\frac{1}{2}}, \frac{19}{20}) \gg \frac{x^{\frac{1}{2}}}{\log x}$. So in this case, for $p \in V({x^{\frac{1}{2}}, \frac{19}{20}})$, from \eqref{lam} we have $\lambda_{F} (p) > 10^{-1/2}$. That gives us
\[\underset{n \leq x} \sum {}^N \lambda_{F} (n) \geq \underset{p_1, p_2 \in V(x^{\frac{1}{2}}, \frac{19}{20})}{\sum_{p_1 \neq p_2}} \lambda_{F}(p_{1} p_{2}) \gg (\underset{p  \in V(x^{\frac{1}{2}}, \frac{19}{20})} \sum 1)^2 - \underset{p  \in V(x^{\frac{1}{2}}, \frac{19}{20})} \sum 1 \gg \frac{x}{\log^2 x}. \]

If $g$ satisfies the second inequality in \corref{cor}, then one has $\# V(x^{\frac{1}{2}}, \frac{13}{10}) \gg x^{\frac{1}{2}}/ \log x$. We split $V(x^{\frac{1}{2}}, \frac{13}{10}) $ into two disjoint sets: \newline 
(I) those $p$ for which $|\lambda_{f}(p)| \geq 14/10$, in which case from \eqref{sum of two} one gets $\lambda_{F}(p) \geq 1/10$; \newline
(II) those $p$ for which $|\lambda_{f}(p)| < 14/10$, in which case from \eqref{lam} one gets $\lambda_{F}(p) > 3 \sqrt{2} /10$. 

Thus combining all the cases above we have,
\[ \underset{n \leq x} \sum {}^N \lambda_{F} (n) \geq \underset{p_1, p_2 \in V(x^{1/2}, \frac{13}{10})}{\sum_{p_{1} \neq p_{2}}} \lambda_{F}(p_{1} p_{2}) \gg (\underset{p  \in V(x^{\frac{1}{2}}, \frac{13}{10})} \sum 1)^2 - \underset{p  \in V(x^{\frac{1}{2}}, \frac{13}{10})} \sum 1 \gg \frac{x}{\log^2 x}. \qedhere  \]
\end{proof}

\subsection{Proof of \thmref{mthm}}
Suppose to the contrary that the first sign change in the sequence $\lambda_F(n)$ occurs after $x$ and that 
\begin{equation} \label{contra}
x \gg_\epsilon Q_F^{1/2-2 \theta + \epsilon}
\end{equation}
for a given $\epsilon >0$. We then look at the upper bound \eqref{upbd} for $S(F,x)$ (replacing $\epsilon$ by $\epsilon/8$ there) and the lower bound from \eqref{lwbd} (note that from our hypothesis in \thmref{mthm} that $(\log Q_{g})^2 \leq c \log Q_f$ for some absolute constant $c$, it follows easily that $\tfrac{c_1}{2} \log x \geq (\log Q_{g})^2$ for some absolute constant $c_1$, so that we can apply \propref{lwbdprop}). This leads to the inequality
\[ \frac{x}{\log^{4} x} \ll_{\epsilon} Q_{F}^{\frac{1}{2}- 2\theta+ \epsilon/4}. \]
As $\epsilon>0$ is arbitrary, applying Lemma $4$ of \cite{choie} we conclude
\begin{equation} \label{last}
x \ll_{\epsilon} Q_{F}^{\frac{1}{2}- 2\theta+ \epsilon/4} (\log Q_{F})^4 \ll Q_{F}^{\frac{1}{2}- 2\theta+ \epsilon/2}. 
\end{equation}
We arrive at a contradiction with \eqref{contra}. This completes the proof of \thmref{mthm}.



\begin{thebibliography}{22}


\bibitem{bo-pilot} S. B\"ocherer, R. Schulze-Pilot: {\em Siegel Modular Forms and Theta series attached to Quaternion algebras.} Nagoya Math. J.
Vol. 121 (1991), 35-96.

\bibitem{brown} J. Brown: {\em The first negative Hecke eigenvalue of genus $2$ Siegel cuspforms with level $N \geq 1$.} Int. J. Number Theory 6 (2010), no. 4, pp 857--867.

\bibitem{choie} Y. J. Choie, W. Kohnen: {\em The first sign change of Fourier coefficients of cusp forms.} Amer. J. Math 131 (2009), no. 2, pp 517--543.

\bibitem{DK} S. Das, W. Kohnen : {\em On sign changes of eigenvalues of Siegel cusp forms of genus 2 in prime powers}, Acta Arithmetica 183 (2018), pp 167--172.

\bibitem{iwaniec}  H. Iwaniec, E. Kowalski : {\em Analytic Number Theory.}, Volume 53, American mathemaical society, Colloquium publications.

\bibitem{kim}  H.H. Kim : {\em Functoriality for exterior square of $\mrm{GL}_{4}$ and the symmetric fourth of $\mrm{GL}_{2}$.} With appendix 1 by Dinakar Ramakrishnan and appendix 2 by Kim and Peter Sarnak. J. Amer. Math. Soc., 16(1) (2003), pp 139--183.

\bibitem{kimsha} H.H. Kim, F. Shahidi : {\em Cuspidality of symmetric powers with applications.} Duke Math. J., 112(1) (2002), pp 177--197.

\bibitem{KSieg} W. Kohnen : {\em Sign changes of Hecke eigenvalues of Siegel cusp forms of genus two.} Proc. Amer. Math. Soc. 135 (2007), pp 997--999.

\bibitem{siegel} W. Kohnen, J. Sengupta : {\em The first negative Hecke eigenvalue of a Siegel cusp form of genus two } Acta Arithmetica, 129.1 (2007), pp 53--62.

\bibitem{li} W-C. Li : {\em Newforms and functional equations} Math. Ann. 212 (1975), pp 285--315. 

\bibitem{km1} K. Matom\"aki : {\em On signs of Fourier coefficients of cusp forms} Math. Proc. Camb. Phil. Soc., 152 (2012), pp 207--222.

\bibitem{km} K. Matom\"aki, M.Radziwill: {\em Sign changes of Hecke eigenvelues},  Geometric and Functional analysis (December 2015), Volume 25, issue 6, pp 1937--1955. 

\bibitem{subconvexity} P. Michel , A. Venkatesh : {\em The subconvexity problem for $\mrm{GL}_{2}$.},  Publ.math.IHES. 111 (2010), pp 171--271. 

\bibitem{pita} A. Pitale, R, Schmidt :{\em Sign changes of Hecke eigenvalues of Siegel cusp forms of degree $2$.}, Proc. of Amer. Math Soc. 136 (2008), pp 3831--3838.

\bibitem{saha} A. Saha, R. Schmidt: {\em Yoshida lifts and simultaneous non-vanishing of dihedral twists of modular $L$-functions},  J. Lond. Math. Soc. (2) 88 (2013), no. 1, pp 251--270.

\bibitem{tenenbaum} G. Tenenbaum: {\em Introduction to analytic and probabilistic number theory}, Cambridge University press, english edition (1995). 

\bibitem{yoshida} H. Yoshida: {\em Siegel's modular forms and the arithmetic of quadratic forms}, Invent. Math. 60 (1980), pp 193--248.
\end{thebibliography}
\end{document}